\newtheorem{theorem}{Theorem}
\newtheorem{lemma}{Lemma}
\newtheorem{remark}{Remark}
\title{Fractional Laplacian in {\sf V}-shaped waveguide}
\author{Fedor Bakharev\thanks{St.Petersburg State University, Universitetskaya emb. 7-9, St.Petersburg, 199034, Russia, e-mail: f.bakharev@spbu.ru}
\ and
Sergey Matveenko\thanks{Chebyshev Laboratory, St. Petersburg State University, 14th Line V.O., 29, Saint Petersburg 199178 Russia, e-mail: matveis239@gmail.com}
}
\begin{document}

\maketitle

\noindent{\bf Abstract.} The spectral properties of the restricted fractional Dirichlet Laplacian in {\sf V}-shaped waveguides are studied. The continuous spectrum for such domains with cylindrical outlets is known to occupy the ray $[\Lambda_\dagger, +\infty)$ with the threshold corresponding to the smallest eigenvalue of the cross-sectional problems. In this work the presence of a discrete spectrum at any junction angle is established along with the monotonic dependence of the discrete spectrum on the angle.

\medskip

\noindent{\bf Keywords:} restricted fractional Laplacian, waveguide, Dirichlet spectrum

\medskip

\noindent{\bf AMS classification codes:} Primary: 35R11, Secondary: 81Q10.

\section{Introduction}

The goal of this work is to study the spectral properties of the restricted fractional Dirichlet Laplacian in specific types of junctions of cylindrical domains --- {\sf V}-shaped waveguides.

The spectrum of the Dirichlet problem for the conventional Laplace operator in such domains is well-studied. The continuous spectrum in domains with cylindrical outlets to infinity always occupies the ray $[\Lambda_\dagger, +\infty)$ with the threshold coinciding with the smallest eigenvalue of the problems on the orthogonal cross-sections of the outlets (see, for instance, \cite{DaLaRay_2012}). In particular, in case of {\sf V}-shaped waveguide this threshold does not depend on the angle of the waveguide's opening. The research of the discrete spectrum started with the work \cite{ESS} where the planar case of the right-angled waveguide was considered. Later in \cite{Av91} it was proven that for small angles arbitrarily many eigenvalues could be obtained below $\Lambda_\dagger$. In \cite{DaLaRay_2012} the monotonic dependence of the eigenvalues and their number on the opening size of the waveguide was established. Another approach was performed in works \cite{na2010, na2011, na2012} where the asymptotics of the spectrum with angle close to straight angle was studied.  The multidimensional case does not differ much from the two-dimensional one. The only difference is in the variety of the possible cross-sections and the way of forming junctions. In some special cases it is possible to establish the uniqueness of the eigenvalue below the threshold in $\mathbb{R}^3$ (see e.g. \cite{BaMaNa}), however, some cross-sections could provide multiple eigenvalues even in case of right angle of the opening \cite{BaMaNaZaa} (the {\sf X}-shaped waveguides were studied, but for the {\sf V}-shaped waveguides similar result is true).

For the restricted fractional Dirichlet Laplacian, the structure of the continuous spectrum is known to be the same (see work \cite{BaNa2023JST}). In current work, we obtain results analogous to those described above: we establish the presence of a discrete spectrum at any angle of the junction, as well as the monotonic dependence of the discrete spectrum on the angle. However, up to now authors do not know how to prove the uniqueness of the eigenvalue below the threshold even for sufficiently large angles. 

The main difficulty of the problem from our point of view is the absence of the Dirichlet-Neumann bracketing technique which of a great use for the Laplace operator in works mentioned above. The main reason is the non-local nature of the operator which allows outlets somehow interact with each other. A crucial strategy is to transform the non-local operator to the local one involving the so-called Caffarelli-Silvestre extension (see original paper \cite{CaSi07}, earlier related work \cite{MO}, and description below). However, the cross-sections of the valuable domain in this case become unbounded and intersect with each other, so the methods used in aforementioned works are still out of use. Inspired by work \cite{Av91} we propose a transformation of the {\sf V}-shaped waveguide into a cylinder which successfully takes into account the unboundedness of cross-sections and allows to prove the existence of the discrete spectrum as well as the monotonic dependence on the opening angle.

The structure of the article is the following. In the Sec. \ref{notation} we introduce the notation, describe the geometry of the domain, provide necessary definitions and properties of the restricted fractional Dirichlet Laplacian, and give the necessary information on  the Caffarelli-Silvestre extension. In the Sec. \ref{existence} we prove the existence of the discrete spectrum for any angle. In the Sec. \ref{monotonicity} we prove that the discrete spectrum monotonically depends on the angle of the {\sf V}-shaped waveguide.

\section{Notation and statement of the problem}
\label{notation}

Let $\{e_1, e_2, \ldots, e_n\}$ represent an orthonormal basis in the Euclidean space ${\mathbb R}^n$, $n\geqslant 2$. We specify the first and last directions and denote by $\pmb{\bf x} = (x, y, z)$ the coordinates of the vector $xe_1 + \sum_{j=2}^{n-1} y_je_j + ze_n$, where $y=(y_2, y_3, \ldots, y_{n-1})$. In case $n=2$ we assume that the term $y$ is omitted. 

Let $\omega$ be a connected domain with Lipschitz boundary in ${\mathbb R}^{n-1}$.
The {\sf V}-shaped waveguide $\Omega_\beta$ we define by the formula
\begin{equation*} \label{def_waveguide}
\Omega_\beta = \{(x, y, z)\in\mathbb R^{n}\colon (x\sin\beta - |z|\cos\beta, y)\in\omega\}.
\end{equation*}
When $n=2$, if $\omega$ represents a unit segment, the waveguide $\Omega_\beta$ is a unit strip broken at an angle~$\beta$.  If $n>2$, the waveguide $\Omega_\beta$ consists of a junction of two semi-infinite cylinders with cross-section $\omega$, which meet at an angle $\beta$.

\begin{figure}[ht!]
\begin{center}
    \includegraphics[width=0.7\textwidth]{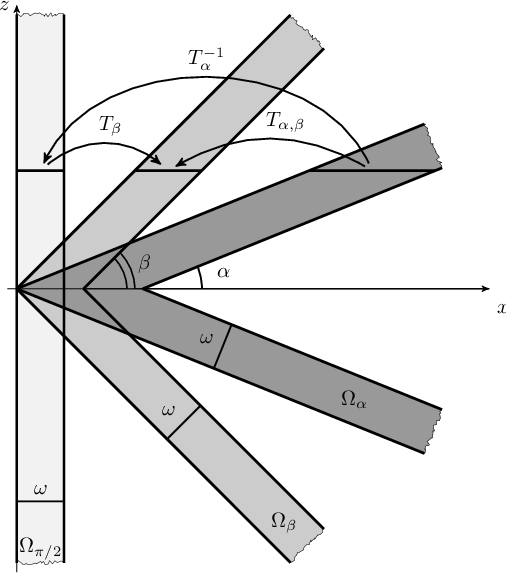}
    \caption{The {\sf V}-shaped waveguide $\Omega_\alpha$ (dark gray) overlaps the {\sf V}-shaped waveguide  $\Omega_\beta$ (light gray) and the straight tube $\Omega_{\pi/2}.$}
    \label{fig-01}
\end{center}
\end{figure}

For $s\in (0,1)$ we define the restricted Dirichlet fraction Laplacian $\mathcal{A}^\Omega_s$ by the quadratic form
\begin{equation*}
a_s^\Omega[u] = \int_{\mathbb R^n} |\pmb\xi|^{2s}|{\mathcal F}_n u(\pmb\xi)|^2 d\pmb\xi, \ \ \forall u\in\widetilde{H}^s(\Omega)
\end{equation*}
where $\mathcal{F}_n$ stands for the $n$-dimensional Fourier transform
\begin{equation*}
{\mathcal F}_n u(\pmb\xi) = \frac1{(2\pi)^{\frac n2}}\int_{\mathbb R^n}e^{-i\pmb\xi\cdot\pmb{\bf x}}u(\pmb{\bf x})\,d\pmb{\bf x},
\end{equation*}
where $\pmb\xi = (\xi, \eta, \zeta)$ is a corresponding frequency variable.  
The domain of the quadratic form $a_s^\Omega$ is defined as follows:
$$
\mathop{\rm Dom}\nolimits (a_s^\Omega)=\widetilde{H}^s(\Omega):=
\{u\in H^s(\mathbb{R}^n)\colon \mathop{\rm supp}\nolimits u\subset \overline{\Omega}\},
$$
where $H^s(\mathbb{R}^n)$ is the classical Sobolev--Slobodetskii space (see, e.g., \cite[Subsection 2.3.3]{Triebel})
$$
H^s(\mathbb{R}^n)=\{u\in L_2(\mathbb{R}^n)\colon |\pmb\xi|^s \mathcal{F}_n u(\pmb\xi)\in L_2(\mathbb{R}^n)\}.
$$

The connection between fractional differential operators and generalized harmonic extensions was established over fifty years ago \cite{MO} and gained popularity due to the influential work \cite{CaSi07}. Specifically, for a function $u$ belonging to $\widetilde{H}^s(\Omega)$, the function
\begin{equation}
    \label{def-Caf-Sil}
U(\pmb{\bf x}, t) = \int_{\mathbb R^n}{\mathcal P}_s(\pmb{\bf x} - \widetilde{\pmb{\bf x}}, t)u(\widetilde{\pmb{\bf x}})\,d\widetilde{\pmb{\bf x}}
\end{equation}
with the generalized Poisson kernel
\begin{equation}
    \label{def-Poisson-kernel}
{\mathcal P}_s(\pmb{\bf x}, t) = \frac{\Gamma\left(\frac{n+2s}2\right)}{\pi^{\frac  n2}\Gamma(s)}\frac{t^{2s}}{(|\pmb{\bf x}|^2 + t^2)^{\frac n2 + s}}
\end{equation}
is called the Caffarelli -- Silvestre extension of $u$. The function $U$ is a minimizer of the weighted Dirichlet integral 
\begin{equation}
    \label{def-Dir-int}
{\mathcal E}_s^\Omega(W) = \int_0^\infty\int_{\mathbb R^n}t^{1-2s}|\nabla W(\pmb{\bf x}, t)|^2d\pmb{\bf x} dt
\end{equation}
over the set 
\begin{equation*}
    {\mathcal W}_s^\Omega(u) = \{W = W(\pmb{\bf x}, t)\colon {\mathcal E}_s^\Omega(W) < \infty,\ W|_{t=0}=u\}.
\end{equation*}
Moreover, the Caffarelli -- Silvestre extension is a solution of the boundary problem
\begin{equation}
\label{def-Caf-Sil-prolem}
    - \mathop{\rm div}\nolimits t^{1-2s}\nabla U = 0, \qquad U(\pmb{\bf x}, 0) = u(\pmb{\bf x}).
\end{equation}
The following identity provide the Dirichlet fractional Laplacian of $u$ via its Caffarelli -- Silvestre extension
\begin{equation*}
    {\mathcal A}_s^\Omega u = - C_s\lim\limits_{t\to0+} t^{1-2s}\partial_t U(\cdot, t), \quad \text{where} \quad C_s = \frac{4^s\Gamma(s+1)}{2s\Gamma(1-s)}.
\end{equation*}
At  the same time the weighted Dirichlet integral is proportional to the quadratic form
\begin{equation}
    \label{def-form-via-energy-repr}
    a_s^\Omega[u]  = C_s{\mathcal E}_s^\Omega(U).
\end{equation}

In the degenerate case when $\beta=\pi/2$ it is proved in
\cite{BaNa2023JST} that the spectrum of ${\mathcal A}_s^{\Omega_{\pi/2}}$ is essential. Namely,
\begin{theorem}
\label{thm-sp-in-tube}
The spectrum of ${\mathcal A}_s^{\Omega_{\pi/2}}$ coincides with the ray
\[
\sigma({\mathcal A}_s^{\Omega_{\pi/2}}) = \sigma_{ess}({\mathcal A}_s^{\Omega_{\pi/2}}) = [\Lambda_\dagger,+\infty),
\]
where $\Lambda_\dagger=\lambda_1({\mathcal A}_s^\omega)$ is the first eigenvalue of the restricted Dirichlet fractional Laplacian on $\omega$.
\end{theorem}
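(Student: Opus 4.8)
The plan is to eliminate the longitudinal variable and reduce to a family of problems on the cross-section $\omega$. For $\beta=\pi/2$ one has $\sin\beta=1$, $\cos\beta=0$, so $\Omega_{\pi/2}=\omega\times\mathbb R_z$ is a straight cylinder, and the natural device is the partial Fourier transform $\mathcal F_z$ in $z$. If $u\in\widetilde H^s(\Omega_{\pi/2})$ then $\operatorname{supp}u\subset\overline\omega\times\mathbb R$, hence $\operatorname{supp}(\mathcal F_z u)(\cdot,\cdot,\zeta)\subset\overline\omega$ for every $\zeta$, and $u\in H^s(\mathbb R^n)$ together with Fubini gives $(\mathcal F_z u)(\cdot,\zeta)\in\widetilde H^s(\omega)$ for a.e.\ $\zeta$. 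Writing $\pmb\xi=(\xi',\zeta)$ and using Plancherel in $z$ one obtains
\begin{equation*}
a_s^{\Omega_{\pi/2}}[u]=\int_{\mathbb R} b_\zeta\big[(\mathcal F_z u)(\cdot,\zeta)\big]\,d\zeta,\qquad
b_\zeta[v]:=\int_{\mathbb R^{n-1}}\big(|\xi'|^2+\zeta^2\big)^s\,|\mathcal F_{n-1}v(\xi')|^2\,d\xi',\quad v\in\widetilde H^s(\omega),
\end{equation*}
where $b_\zeta$ is the quadratic form of the operator $B_\zeta:=(-\Delta_{\mathbb R^{n-1}}+\zeta^2)^s$ (i.e.\ the Fourier multiplier $(|\xi'|^2+\zeta^2)^s$) restricted to $\widetilde H^s(\omega)$; in particular $B_0=\mathcal A_s^\omega$. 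At the level of the operators this says that $\mathcal F_z$ makes $\mathcal A_s^{\Omega_{\pi/2}}$ unitarily equivalent to the constant-fibre direct integral $\int_{\mathbb R}^{\oplus}B_\zeta\,d\zeta$, so that $\sigma(\mathcal A_s^{\Omega_{\pi/2}})=\overline{\bigcup_{\zeta\in\mathbb R}\sigma(B_\zeta)}$.

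Next I would record elementary properties of the fibres. Since $\omega$ is a bounded Lipschitz domain, $\widetilde H^s(\omega)\hookrightarrow L_2(\omega)$ is compact, and $b_\zeta$ generates a norm equivalent to that of $b_0=a_s^\omega$ (subadditivity of $t\mapsto t^s$ gives $a_s^\omega[v]\le b_\zeta[v]\le a_s^\omega[v]+|\zeta|^{2s}\|v\|^2$), so every $B_\zeta$ has compact resolvent and purely discrete spectrum $\lambda_1(B_\zeta)\le\lambda_2(B_\zeta)\le\cdots\to+\infty$. Three pointwise estimates on the symbol then do the rest: (i) $(|\xi'|^2+\zeta^2)^s\ge|\xi'|^{2s}$ gives $b_\zeta[v]\ge a_s^\omega[v]\ge\Lambda_\dagger\|v\|^2$, so $\lambda_1(B_\zeta)\ge\Lambda_\dagger$ for every $\zeta$, while $\lambda_1(B_0)=\lambda_1(\mathcal A_s^\omega)=\Lambda_\dagger$; (ii) $(|\xi'|^2+\zeta^2)^s\ge|\zeta|^{2s}$ gives $\lambda_1(B_\zeta)\ge|\zeta|^{2s}\to+\infty$ as $|\zeta|\to\infty$; (iii) $\big|(|\xi'|^2+\zeta^2)^s-(|\xi'|^2+\zeta_0^2)^s\big|\le|\zeta^2-\zeta_0^2|^s$ gives $|b_\zeta[v]-b_{\zeta_0}[v]|\le|\zeta^2-\zeta_0^2|^s\|v\|^2$, hence by min--max $|\lambda_1(B_\zeta)-\lambda_1(B_{\zeta_0})|\le|\zeta^2-\zeta_0^2|^s$, so $\zeta\mapsto\lambda_1(B_\zeta)$ is continuous.

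Assembling: the identity of the first step together with (i) yields $a_s^{\Omega_{\pi/2}}[u]\ge\Lambda_\dagger\|u\|_{L_2}^2$, i.e.\ $\sigma(\mathcal A_s^{\Omega_{\pi/2}})\subset[\Lambda_\dagger,+\infty)$; conversely, by (ii)--(iii) the continuous function $\zeta\mapsto\lambda_1(B_\zeta)$ equals $\Lambda_\dagger$ at $\zeta=0$ and tends to $+\infty$, so by the intermediate value theorem for each $\mu\ge\Lambda_\dagger$ there is $\zeta^\ast$ with $\mu=\lambda_1(B_{\zeta^\ast})\in\sigma(B_{\zeta^\ast})\subset\sigma(\mathcal A_s^{\Omega_{\pi/2}})$. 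Hence $\sigma(\mathcal A_s^{\Omega_{\pi/2}})=[\Lambda_\dagger,+\infty)$, and since a half-line has no isolated points, $\sigma_{\mathrm{disc}}(\mathcal A_s^{\Omega_{\pi/2}})=\emptyset$ and therefore $\sigma_{ess}(\mathcal A_s^{\Omega_{\pi/2}})=\sigma(\mathcal A_s^{\Omega_{\pi/2}})=[\Lambda_\dagger,+\infty)$.

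I expect the real obstacle to be the rigorous justification of the direct-integral decomposition: that $\mathcal F_z$ maps $\widetilde H^s(\Omega_{\pi/2})$, with its form norm, \emph{onto} the fibered space $\int_{\mathbb R}^{\oplus}\widetilde H^s(\omega)\,d\zeta$ carrying the norm $\int_{\mathbb R}\big(b_\zeta[v(\zeta)]+\|v(\zeta)\|^2\big)d\zeta$. The inclusion needed for the lower bound is elementary, but the surjectivity --- which is what guarantees $\sigma(B_{\zeta^\ast})\subset\sigma(\mathcal A_s^{\Omega_{\pi/2}})$ --- rests on the description of $\widetilde H^s(\omega)$ for Lipschitz $\omega$ as the $H^s(\mathbb R^{n-1})$-closure of $C_c^\infty(\omega)$ and on the compatibility of the nonlocal norm with the product structure of $\omega\times\mathbb R$. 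Should one prefer to bypass the general fibration theory, the inclusion $[\Lambda_\dagger,+\infty)\subset\sigma_{ess}(\mathcal A_s^{\Omega_{\pi/2}})$ can be obtained instead via explicit singular Weyl sequences $u_R(x,y,z)=\psi_{\zeta^\ast}(x,y)\,e^{i\zeta^\ast z}\,\chi(z/R)$, with $\psi_{\zeta^\ast}$ the ground state of $B_{\zeta^\ast}$ and $\chi\in C_c^\infty(\mathbb R)$, $\chi\equiv1$ near $0$; there the only nontrivial point is the estimate for the nonlocal commutator $\big[\mathcal A_s^{\Omega_{\pi/2}},\,\chi(\cdot/R)\big]$, which takes the place of the trivial cutoff commutator available for the ordinary Laplacian. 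The Caffarelli--Silvestre identity \eqref{def-form-via-energy-repr} may be used as a convenient alternative bookkeeping device (running the fibering on the local extension problem) but is not essential here.
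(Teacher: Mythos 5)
The paper does not actually prove this statement: Theorem~\ref{thm-sp-in-tube} is imported verbatim from \cite{BaNa2023JST} (``it is proved in \cite{BaNa2023JST} that\dots''), so there is no internal argument to compare against, and your proof is a self-contained substitute rather than a variant. As such it is sound and is the natural fibering argument for a translation-invariant direction. The partial Fourier transform in $z$ does diagonalize $a_s^{\Omega_{\pi/2}}$ into the fibres $b_\zeta$ with symbol $(|\xi'|^2+\zeta^2)^s$ on $\widetilde H^s(\omega)$; your three symbol estimates (monotonicity in $|\xi'|$, the lower bound $|\zeta|^{2s}$, and the H\"older-type bound $|a^s-b^s|\le|a-b|^s$ giving continuity of $\lambda_1(B_\zeta)$) are all correct, and the intermediate value theorem plus the observation that a half-line has no isolated points finishes the job. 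The one point you flag as the ``real obstacle'' --- surjectivity of $\mathcal F_z$ onto the fibered form domain --- is in fact easier than you suggest: both defining conditions of $\widetilde H^s(\Omega_{\pi/2})$ factor through the fibres (the support constraint $\operatorname{supp}u\subset\overline\omega\times\mathbb R$ is equivalent to $\operatorname{supp}(\mathcal F_z u)(\cdot,\zeta)\subset\overline\omega$ for a.e.\ $\zeta$, and the $H^s(\mathbb R^n)$ seminorm is exactly $\int b_\zeta\,d\zeta$ by Plancherel--Fubini), so the map is onto and the direct-integral identification $\sigma(\mathcal A_s^{\Omega_{\pi/2}})=\overline{\bigcup_\zeta\sigma(B_\zeta)}$ follows from the standard theory once one notes norm-resolvent continuity of $\zeta\mapsto B_\zeta$, which your estimate (iii) already provides. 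If you prefer the Weyl-sequence route instead, the commutator estimate for the cutoff is indeed where all the work sits, and it is not shorter than the direct-integral bookkeeping. Either way, your argument is correct and could stand in for the external citation.
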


Note that since the space $\widetilde{H}^s(\omega)$ is compactly embedded into $L_2(\omega)$,
the spectrum of ${\mathcal A}_s^\omega$ is purely discrete (see, e.g. \cite[Theorem 7.1]{NePaVa12} and \cite[Theorem 4.10.1]{BiSo}) and forms the unbound monotonically increased sequence. We denote by $\varphi$ the eigenfunction corresponding to the first eigenvalue $\Lambda_\dagger=\lambda_1({\mathcal A}_s^\omega)$. 

We define isomorphism between the straight tube $\Omega_{\pi/2}$ and the broken waveguide $\Omega_\beta$
(see fig. \ref{fig-01})
\begin{equation*}
T_\beta\colon \Omega_{\pi/2}\to\Omega_\beta,\qquad
T_\beta(\pmb{\bf x})=(x\csc\beta+|z|\cot\beta, y, z),    
\end{equation*}
and its inverse
\begin{equation*}
 T_\beta^{-1}\colon \Omega_\beta\to\Omega_{\pi/2},\qquad
T_\beta^{-1}(\pmb{\bf x})=(x\sin\beta-|z|\cos\beta, y, z).
\end{equation*}
Note that being Lipschitz these isomorphism preserve the Sobolev--Slobodetskii spaces thus the following lemma is true.
\begin{lemma}
\label{lem-Hs-iso}
    Given $s\in(0, 1)$ and $\beta\in(0,\pi/2)$, the coordinate transform $T_\beta$ provides an isomorphism between $\widetilde{H}^s(\Omega_\beta)$ and $\widetilde{H}^s(\Omega_{\pi/2})$, i.e. $u\in\widetilde{H}^s(\Omega_\beta)$ implies that $u \circ T_\beta\in \widetilde{H}^s(\Omega_{\pi/2})$ and $u\in\widetilde{H}^s(\Omega_{\pi/2})$ implies that $u \circ T_\beta^{-1}\in \widetilde{H}^s(\Omega_\beta)$.
\end{lemma}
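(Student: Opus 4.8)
The plan is to realize $T_\beta$ as (the restriction of) a bi-Lipschitz self-homeomorphism of $\mathbb{R}^n$ and then invoke the invariance of $H^s(\mathbb{R}^n)$ under such maps for $s\in(0,1)$. First I would note that the formula $T_\beta(\pmb{\bf x})=(x\csc\beta+|z|\cot\beta,\,y,\,z)$ makes sense for every $\pmb{\bf x}\in\mathbb{R}^n$, that it is inverted on all of $\mathbb{R}^n$ by $\pmb{\bf x}\mapsto(x\sin\beta-|z|\cos\beta,\,y,\,z)$ (the $z$-coordinate, hence $|z|$, is untouched, so this is an immediate computation), and that both maps are Lipschitz: each is linear on the half-spaces $\{\pm z\ge0\}$, the two linear branches agree on $\{z=0\}$, and $\bigl|\,|z|-|z'|\,\bigr|\le|z-z'|$. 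Hence $T_\beta$ is bi-Lipschitz with a two-sided bound $c_\beta|\pmb{\bf x}-\pmb{\bf x}'|\le|T_\beta\pmb{\bf x}-T_\beta\pmb{\bf x}'|\le C_\beta|\pmb{\bf x}-\pmb{\bf x}'|$ holding for \emph{all} pairs, including those straddling $\{z=0\}$. Since $(x,y)\in\omega$ if and only if $\bigl((x\csc\beta+|z|\cot\beta)\sin\beta-|z|\cos\beta,\,y\bigr)=(x,y)\in\omega$, the map $T_\beta$ carries $\Omega_{\pi/2}$ onto $\Omega_\beta$ and $\overline{\Omega_{\pi/2}}$ onto $\overline{\Omega_\beta}$, so $\operatorname{supp}u\subset\overline{\Omega_\beta}$ is equivalent to $\operatorname{supp}(u\circ T_\beta)=T_\beta^{-1}(\operatorname{supp}u)\subset\overline{\Omega_{\pi/2}}$.

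It then remains to show that precomposition with $T_\beta$ maps $H^s(\mathbb{R}^n)$ boundedly into itself when $s\in(0,1)$; applying the same to $T_\beta^{-1}$ gives a bounded inverse, and the two operators are obviously linear and mutually inverse. For this I would replace the Fourier-side norm by the equivalent Gagliardo--Slobodetskii norm (valid exactly because $s\in(0,1)$; see \cite[Subsection 2.3.3]{Triebel}, \cite{NePaVa12})
\[
\|u\|_{H^s(\mathbb{R}^n)}^2\;\simeq\;\|u\|_{L_2(\mathbb{R}^n)}^2+\iint_{\mathbb{R}^n\times\mathbb{R}^n}\frac{|u(\pmb{\bf x})-u(\pmb{\bf x}')|^2}{|\pmb{\bf x}-\pmb{\bf x}'|^{n+2s}}\,d\pmb{\bf x}\,d\pmb{\bf x}',
\]
and change variables $\pmb{\bf x}=T_\beta^{-1}(\pmb{\bf w})$, $\pmb{\bf x}'=T_\beta^{-1}(\pmb{\bf w}')$ in the double integral. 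The Jacobian of $T_\beta^{-1}$ is piecewise constant (equal to $\sin\beta$ a.e.), hence bounded above and below, and the two-sided Lipschitz bound makes $|\pmb{\bf x}-\pmb{\bf x}'|^{-(n+2s)}$ comparable to $|\pmb{\bf w}-\pmb{\bf w}'|^{-(n+2s)}$; the $L_2$ term is handled by the same single change of variables. This yields $\|u\circ T_\beta\|_{H^s(\mathbb{R}^n)}\simeq\|u\|_{H^s(\mathbb{R}^n)}$.

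Putting the pieces together: for $u\in\widetilde H^s(\Omega_\beta)$ the second step gives $u\circ T_\beta\in H^s(\mathbb{R}^n)$ and the first gives $\operatorname{supp}(u\circ T_\beta)\subset\overline{\Omega_{\pi/2}}$, so $u\circ T_\beta\in\widetilde H^s(\Omega_{\pi/2})$, and symmetrically for $T_\beta^{-1}$; this is the claimed isomorphism. The only genuinely delicate point is the uniform bi-Lipschitz estimate across the fold $\{z=0\}$ --- that gluing the two linear branches produces a global map with two-sided constants independent of the chosen pair of points --- while the rest is a routine change of variables. (Alternatively one may simply quote the general fact that bi-Lipschitz changes of coordinates preserve $H^s(\mathbb{R}^n)$ for $0<s<1$, which is the remark the authors allude to just before the lemma.)
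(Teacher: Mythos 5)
Your proof is correct and follows essentially the same route as the paper, which disposes of this lemma with the single remark that $T_\beta$ and $T_\beta^{-1}$ are Lipschitz and that such coordinate changes preserve the Sobolev--Slobodetskii spaces. Your write-up merely supplies the standard details behind that remark (the bi-Lipschitz gluing across $\{z=0\}$, the Gagliardo-norm equivalence for $s\in(0,1)$, the piecewise-constant Jacobian, and the support tracking).
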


For our purpose we need the following embedding statement for the Caffarelli -- Silvestre extension (see e.g. \cite{BaNa2023JST}).

\begin{lemma}
\label{lem-CS-L2-converge}
Let $n > 2 - 2s$. Suppose that $\Omega$
 is a bounded domain in $\mathbb R^n$; then the Caffarelli -- Silvestre extension of any function $u\in{H}^s(\Omega)$ belongs to $L_2(\mathbb R^n\times \mathbb R_+)$ with weight $t^{1-2s}$.
\end{lemma}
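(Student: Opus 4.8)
\medskip
\noindent\emph{Proof proposal.}
The plan is to pass to the Fourier picture in the variables $\pmb{\bf x}$, where the Caffarelli--Silvestre extension acts as a radial Fourier multiplier, and then to split the resulting weighted integral into a high-frequency part, controlled by $\|u\|_{L_2}$, and a low-frequency part, which is exactly where the restriction $n>2-2s$ will be used.

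First I would record that, since $\Omega$ is bounded, $u\in H^s(\Omega)\subset L_2(\Omega)\subset L_1(\Omega)$ by the Cauchy--Schwarz inequality, so its extension by zero to $\mathbb R^n$ lies in $L_1(\mathbb R^n)\cap L_2(\mathbb R^n)$ --- and only this, not the fractional smoothness of $u$, will be needed. In particular $\mathcal F_n u$ is bounded and continuous with $\|\mathcal F_n u\|_{L_\infty}\le(2\pi)^{-n/2}\|u\|_{L_1}$, while $\|\mathcal F_n u\|_{L_2}=\|u\|_{L_2}$ by Plancherel. By \eqref{def-Caf-Sil} each slice $U(\cdot,t)$ is the convolution $\mathcal P_s(\cdot,t)*u$; combining the scaling $\mathcal P_s(\pmb{\bf x},t)=t^{-n}\mathcal P_s(\pmb{\bf x}/t,1)$ with the classical Bessel representation of the Fourier transform of $(1+|\pmb{\bf x}|^2)^{-(n+2s)/2}$ yields
\begin{equation*}
\mathcal F_n U(\pmb\xi,t)=\Phi(t|\pmb\xi|)\,\mathcal F_n u(\pmb\xi),\qquad
\Phi(\rho)=\frac{2^{1-s}}{\Gamma(s)}\,\rho^{s}K_s(\rho),
\end{equation*}
where $K_s$ is the modified Bessel function of the second kind; the function $\Phi$ is continuous on $[0,+\infty)$, satisfies $0<\Phi\le\Phi(0)=1$, and decays exponentially as $\rho\to+\infty$.

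Then, by Plancherel in $\pmb{\bf x}$ and Tonelli's theorem (all integrands non-negative),
\begin{equation*}
\int_0^\infty\!\!\int_{\mathbb R^n}t^{1-2s}|U(\pmb{\bf x},t)|^2\,d\pmb{\bf x}\,dt
=\int_{\mathbb R^n}|\mathcal F_n u(\pmb\xi)|^2\Big(\int_0^\infty t^{1-2s}\Phi(t|\pmb\xi|)^2\,dt\Big)\,d\pmb\xi,
\end{equation*}
and the change of variable $\rho=t|\pmb\xi|$ turns the inner integral into $c_s|\pmb\xi|^{2s-2}$ with $c_s=\int_0^\infty\rho^{1-2s}\Phi(\rho)^2\,d\rho$. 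Here $c_s<\infty$: near $\rho=0$ the integrand behaves like $\rho^{1-2s}$, integrable precisely because $s<1$, and near $\rho=+\infty$ it decays exponentially. It therefore remains to bound $\int_{\mathbb R^n}|\pmb\xi|^{2s-2}|\mathcal F_n u(\pmb\xi)|^2\,d\pmb\xi$. Splitting at $|\pmb\xi|=1$: on $\{|\pmb\xi|\ge 1\}$ one has $|\pmb\xi|^{2s-2}\le 1$ because $s<1$, so this contribution is at most $\|u\|_{L_2}^2$; on $\{|\pmb\xi|<1\}$ one uses $|\mathcal F_n u|\le(2\pi)^{-n/2}\|u\|_{L_1}$ and passes to polar coordinates, obtaining a constant multiple of $\int_0^1\rho^{\,n-3+2s}\,d\rho$, which is finite exactly when $n+2s-2>0$, i.e.\ when $n>2-2s$. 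Together these bounds give $U\in L_2(\mathbb R^n\times\mathbb R_+,\,t^{1-2s})$.

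The one-dimensional and low-frequency estimates above are routine; the ingredient that needs real care is the description of the multiplier $\Phi$, namely its uniform boundedness and its decay at infinity, which rests on the Bessel-function formula for the Fourier transform of the generalized Poisson kernel \eqref{def-Poisson-kernel}. I expect this to be the main technical point. It is also worth noting that the hypothesis $n>2-2s$ is sharp for this argument, entering solely through the integrability of $|\pmb\xi|^{2s-2}$ near the origin.
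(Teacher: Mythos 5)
Your argument is correct. Note first that the paper itself gives no proof of this lemma --- it is quoted from \cite{BaNa2023JST} --- so the comparison is with the natural argument in the spirit of that reference and of the computations the authors actually carry out in Section \ref{existence}. Your Fourier-side proof is sound: the multiplier identity $\mathcal F_nU(\pmb\xi,t)=\Phi(t|\pmb\xi|)\mathcal F_nu(\pmb\xi)$ with $\Phi(\rho)=\tfrac{2^{1-s}}{\Gamma(s)}\rho^sK_s(\rho)$ is the standard Caffarelli--Silvestre fact, the scaling $\rho=t|\pmb\xi|$ correctly produces $c_s|\pmb\xi|^{2s-2}$ with $c_s<\infty$ (integrability at $0$ from $s<1$, at $\infty$ from the exponential decay of $K_s$), and the high/low frequency split isolates the hypothesis $n>2-2s$ exactly where it is needed. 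There is, however, a more elementary route that avoids Bessel functions entirely and mirrors the estimate the authors perform for $\|\Phi(\cdot,t);L_2\|^2$ in the case $n-1<2-2s$: by Young's inequality, $\|U(\cdot,t);L_2(\mathbb R^n)\|\leqslant\|{\mathcal P}_s(\cdot,t);L_1\|\,\|u;L_2\|=\|u;L_2\|$ for small $t$, while $\|U(\cdot,t);L_2\|\leqslant\|{\mathcal P}_s(\cdot,t);L_2\|\,\|u;L_1\|\leqslant Ct^{-n/2}\|u;L_1\|$ for large $t$ (the second factor finite because $\Omega$ is bounded); then $\int_0^1t^{1-2s}\,dt<\infty$ since $s<1$ and $\int_1^\infty t^{1-2s-n}\,dt<\infty$ precisely when $n>2-2s$. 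Both proofs use only $u\in L_1\cap L_2$, as you correctly observe; your version buys a sharper picture of where the weight $|\pmb\xi|^{2s-2}$ comes from, at the cost of invoking the Bessel representation of the kernel, whereas the physical-space version needs nothing beyond the two Young estimates already present elsewhere in the paper.
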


Hereinafter, we denote by $C$ various positive constants.

\section{Existence of the discrete spectrum}
\label{existence}

\begin{theorem}
    Given $s\in(0, 1)$ and $\beta\in(0,\pi/2)$, the discrete spectrum of ${\mathcal A}_s^{\Omega_\beta}$ is not empty.
\end{theorem}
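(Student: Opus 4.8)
\emph{Plan.} It suffices to exhibit $u\in\widetilde H^s(\Omega_\beta)\setminus\{0\}$ with $a_s^{\Omega_\beta}[u]<\Lambda_\dagger\|u\|^2_{L_2(\Omega_\beta)}$. Indeed, it is known that $\sigma_{ess}(\mathcal A_s^{\Omega_\beta})=[\Lambda_\dagger,+\infty)$ (the essential spectrum has the same structure as for $\beta=\pi/2$, cf.\ Theorem~\ref{thm-sp-in-tube} and \cite{BaNa2023JST}), so such a $u$ pushes $\inf\sigma(\mathcal A_s^{\Omega_\beta})$ strictly below $\inf\sigma_{ess}(\mathcal A_s^{\Omega_\beta})$, which forces the spectrum in between to be a nonempty discrete set. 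To bound $a_s^{\Omega_\beta}[u]$ from above I use the Caffarelli--Silvestre representation \eqref{def-form-via-energy-repr} together with the minimality of the extension: for \emph{any} $W$ with $W|_{t=0}=u$ one has $a_s^{\Omega_\beta}[u]=C_s\mathcal E_s^{\Omega_\beta}(U)\le C_s\mathcal E_s^{\Omega_\beta}(W)$, so it is enough to produce one convenient competitor $W$. Below $\mathcal E_s(\cdot):=\mathcal E_s^{\Omega}(\cdot)$, which by \eqref{def-Dir-int} does not depend on $\Omega$.

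\emph{Straightening the waveguide.} Following the idea of \cite{Av91}, I extend $T_\beta$ to a bi-Lipschitz self-map of $\mathbb R^n$ by the same formula and let it act on $\mathbb R^n\times\mathbb R_+$ through $(\pmb{\bf x},t)\mapsto(T_\beta\pmb{\bf x},t)$. Writing $\widetilde W=W\circ(T_\beta\times\mathrm{id})$ and $\widetilde u=\widetilde W|_{t=0}=u\circ T_\beta\in\widetilde H^s(\Omega_{\pi/2})$ (Lemma~\ref{lem-Hs-iso}), a direct computation of the Jacobian ($=\csc\beta$) and of the transformed gradient — the fold of $T_\beta$ along $\{z=0\}$ only affects a null set — gives
\begin{equation*}
\mathcal E_s^{\Omega_\beta}(W)=\csc\beta\int_0^\infty\!\!\int_{\mathbb R^n}t^{1-2s}\Bigl(|\nabla\widetilde W|^2-2\cos\beta\,(\operatorname{sgn}z)\,\partial_z\widetilde W\,\partial_x\widetilde W\Bigr)\,d\pmb{\bf x}\,dt,\qquad \|u\|^2_{L_2(\Omega_\beta)}=\csc\beta\,\|\widetilde u\|^2_{L_2(\Omega_{\pi/2})}.
\end{equation*}
Hence, setting $I(\widetilde W):=\int_0^\infty\!\int_{\mathbb R^n}t^{1-2s}(\operatorname{sgn}z)\,\partial_z\widetilde W\,\partial_x\widetilde W\,d\pmb{\bf x}\,dt$, the task reduces to finding $\widetilde u\in\widetilde H^s(\Omega_{\pi/2})\setminus\{0\}$ and an extension $\widetilde W$ of it with $\mathcal E_s(\widetilde W)-2\cos\beta\,I(\widetilde W)<\tfrac{\Lambda_\dagger}{C_s}\|\widetilde u\|^2$. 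For $\beta=\pi/2$ the extra term is absent and the inequality cannot hold (consistently with Theorem~\ref{thm-sp-in-tube}); for $\beta<\pi/2$ the point is to make $I(\widetilde W)$ help.

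\emph{The trial function.} Take $\widetilde u=\widetilde u_R+\epsilon\,\widetilde g$ where $\widetilde u_R(\pmb{\bf x})=\varphi(x,y)\,\psi(z/R)$ is the standard straight-tube quasimode ($\psi\in C_c^\infty(\mathbb R)$ even, $\psi(0)=1$; a Fourier estimate gives $a_s^{\Omega_{\pi/2}}[\widetilde u_R]-\Lambda_\dagger\|\widetilde u_R\|^2=O(R^{-\min(1,2s)})\to0$), and $\widetilde g(\pmb{\bf x})=h(x,y)\,k(z)$ is a fixed smooth function supported in $\overline{\Omega_{\pi/2}}$ with $k$ even, $\int_{\mathbb R}k=0$, $k(0)\neq0$ and $\int_\omega\varphi h=0$. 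For $\widetilde W$ take $\widetilde W_R+\epsilon\,G$, where $\widetilde W_R$ is the genuine Caffarelli--Silvestre extension of $\widetilde u_R$ on $\mathbb R^n$ (so $C_s\mathcal E_s(\widetilde W_R)=a_s^{\Omega_{\pi/2}}[\widetilde u_R]$) and $G(\pmb{\bf x},t)=\widetilde g(\pmb{\bf x})\,\rho(t)$ with a fixed cutoff $\rho$, $\rho(0)=1$. Expanding and using $\langle\widetilde u_R,\widetilde g\rangle_{L_2}=0$,
\begin{equation*}
\mathcal E_s(\widetilde W)-2\cos\beta\,I(\widetilde W)-\tfrac{\Lambda_\dagger}{C_s}\|\widetilde u\|^2=A_R+2\epsilon\,\mathcal L_R+\epsilon^2 D,
\end{equation*}
with $A_R=\bigl(a_s^{\Omega_{\pi/2}}[\widetilde u_R]-\Lambda_\dagger\|\widetilde u_R\|^2\bigr)/C_s\ge0$, $D$ a constant, and a linear coefficient $\mathcal L_R$ which, after integrating $\int t^{1-2s}\nabla\widetilde W_R\cdot\nabla G$ by parts against \eqref{def-Caf-Sil-prolem} and letting $R\to\infty$, converges — the vanishing moments of $k$ killing all the remaining contributions — to a nonzero multiple of
\begin{equation*}
\mathcal L_\infty:=\cos\beta\,k(0)\int_\omega(\partial_x h)(x,y)\Bigl(\int_0^\infty t^{1-2s}\rho(t)\,\Phi(x,y,t)\,dt\Bigr)\,dx\,dy,
\end{equation*}
where $\Phi=\Phi(x,y,t)$ is the Caffarelli--Silvestre extension of $\varphi$ (taken in the $n-1$ horizontal variables). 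If $h$ and $\rho$ can be chosen so that $\mathcal L_\infty\neq0$, then for $R$ large ($A_R\to0$) and $\epsilon$ small of the sign opposite to $\mathcal L_R$ the right-hand side becomes a fixed negative quantity, and $u=(\widetilde u_R+\epsilon\widetilde g)\circ T_\beta^{-1}\in\widetilde H^s(\Omega_\beta)$ satisfies $a_s^{\Omega_\beta}[u]<\Lambda_\dagger\|u\|^2_{L_2(\Omega_\beta)}$, as required.

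\emph{Expected obstacle.} Two points need real work; the second is the crux. First, when $n=2$ and $0<s\le\tfrac12$ the extension $\Phi$ of $\varphi$ fails to be $t^{1-2s}$-square-integrable (Lemma~\ref{lem-CS-L2-converge} does not apply), so a naive product extension $\Phi\,\psi(\cdot/R)$ has infinite energy; using the honest Caffarelli--Silvestre extension $\widetilde W_R$ of $\widetilde u_R$ repairs this but turns the asymptotics $A_R\to0$, $\mathcal L_R\to\mathcal L_\infty$ into Fourier-analytic estimates rather than one-line computations. Second, one must actually verify $\mathcal L_\infty\neq0$, i.e.\ that a suitable weighted $t$-average of $\partial_x\Phi$ is not proportional on $\omega$ to the ground state $\varphi$. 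In the local (Laplace) setting the analogous quantity is an explicit boundary integral whose non-degeneracy is transparent; here, because of the non-locality of the operator, it is a genuine qualitative property of the Caffarelli--Silvestre extension of $\varphi$, and establishing it — rather than merely arguing that it holds ``generically'' — is the part I expect to be hardest.
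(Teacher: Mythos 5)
Your overall strategy --- straighten the waveguide with $T_\beta$, build a competitor for the Caffarelli--Silvestre energy out of the cross-sectional ground state plus a small correction, and win from the $\cos\beta$ cross term --- is essentially the paper's. But the proof is not complete, and you have located the gap yourself: everything hinges on $\mathcal L_\infty\neq 0$, i.e.\ on the weighted average $\int_0^\infty t^{1-2s}\rho(t)\,\partial_x\Phi(\cdot,t)\,dt$ not being degenerate against the class of cross-sectional profiles $h$ you are allowed to use. Because you perturb the boundary datum itself ($\widetilde u=\widetilde u_R+\epsilon\widetilde g$ with $\widetilde g$ supported in $\overline{\Omega_{\pi/2}}$), the object appearing in the linear term is forced to live on $\omega$ and to be integrated out in $t$, and the resulting non-degeneracy is a genuine qualitative property of the nonlocal ground state that you do not establish. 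As written, the argument proves nothing in the (a priori conceivable) case $\mathcal L_\infty=0$, so the crux of the theorem is missing.

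The paper avoids this entirely by perturbing only the extension, never the trace: the competitor is $\Psi=\chi_R\widetilde\Phi+\varepsilon V$ with $V\in C_0^\infty(\mathbb R^n\times\mathbb R_+)$, compactly supported \emph{away from} $t=0$ and with no support restriction in $\pmb{\bf x}$ beyond compactness and evenness in $z$. Then $\Psi(\cdot,0)$ is unchanged, $\Psi$ remains an admissible element of ${\mathcal W}_s^{\Omega_\beta}(u)$, and after integration by parts against \eqref{def-Caf-Sil-prolem} the linear term reduces to the pairing of $V|_{z=0}$ with the jump of the conormal derivative of $\widetilde\Phi$ across the fold, which is a constant multiple of $\cos\beta\,t^{1-2s}\,\partial_x\Phi(x\sin\beta,y,t)$. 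Its non-vanishing requires only $\partial_x\Phi\not\equiv 0$, which is immediate because $\Phi\to 0$ as $|x|\to\infty$; one then places a bump $V$ where this jump has a fixed sign to force $I_3<0$. This freedom in $(x,y,t)$ --- unavailable to you once the perturbation is tied to a boundary function supported in $\overline{\Omega_{\pi/2}}$ --- is the missing idea. Your first flagged obstacle (divergence of the naive product extension when $n=2$, $s\leqslant 1/2$) is real but surmountable: the paper handles it with a cutoff in $\sqrt{z^2+t^2}$ and explicit kernel and Fourier estimates, producing errors $O(R^{-2\gamma}\log R)$ compatible with the final choice $\varepsilon=R^{-\gamma}$.
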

\begin{proof}
Using the max-min principle, we require a function 
$u\in\widetilde{H}^s(\Omega_\beta)$ such that the following inequality holds:
\begin{equation*}
a_s^{\Omega_\beta}[u] - \Lambda_\dagger\|u;L_2(\mathbb R^n)\|^2<0.
\end{equation*}
Given that the Caffarelli-Silvestre extension is the minimizer of the weighted Dirichlet integral \eqref{def-Dir-int}, and based on \eqref{def-form-via-energy-repr}, our goal is to identify a function $\Psi\in{\mathcal W}_s^{\Omega_\beta}(u)$ for some $u\in\widetilde{H}^s(\Omega_\beta)$ such that 
\begin{equation}
\label{thm1-main-enq}
C_s{\mathcal E}_s^{\Omega_\beta}(\Psi) - \Lambda_\dagger\|\Psi(\cdot, 0); L_2(\mathbb R^n)\|^2 < 0.
\end{equation}
To construct the desired function we spread the first eigenfunction $\varphi$ of~${\mathcal A}_s^\omega$  along the {\sf V}-shaped waveguide as detailed below. The Caffarelli-Silvestre extension of $\varphi$ is denoted by $\Phi=\Phi(x, y, t)$.
Here and below we denote by $\widetilde T_\beta$ and $\widetilde T_\beta^{-1}$ the mappings of $\mathbb R^n\times\mathbb R_+$, given by 
$\widetilde T_\beta(\pmb{\bf x}, t) = (T_\beta(\pmb{\bf x}), t)$ and $\widetilde T_\beta^{-1}(\pmb{\bf x}, t) = (T_\beta^{-1}(\pmb{\bf x}), t)$.
Straightforward computation reveals that when $z\neq 0 $ the function
$\widetilde\Phi(x, y, z, t) = \Phi(T_\beta^{-1}(\pmb{\bf x}), t)$ satisfies the equation
\begin{equation}
\label{thm1-div-Phi-eq}
    -\mathop{\rm div}\nolimits\left(t^{1-2s}\nabla\widetilde\Phi(\pmb{\bf x}, t)\right) =  -\mathop{\rm div}\nolimits\left(t^{1-2s}\nabla\Phi(x, y, t)\right) = 0.
\end{equation}

We now introduce a family of test functions
\begin{equation*}
    \Psi(\pmb{\bf x}, t) = \chi_R(z, t)\widetilde\Phi(\pmb{\bf x}, t) + \varepsilon V(\pmb{\bf x}, t).
\end{equation*}
Here, $V\in C_0^\infty(\mathbb R^n\times\mathbb R_+)$ is a smooth perturbation that is even with respect to $z$ and has compact support. The cut-off multiplier 
$\chi_R$ is defined by the following formula
\begin{equation*}
    \chi_R(z, t) = \begin{cases}
        \chi(R^{-1}\sqrt{z^2 + t^2}),\quad &n -1 \leqslant 2 - 2s;\\
        \chi(R^{-1}|z|),\quad&n -1 > 2 - 2s,
    \end{cases}
\end{equation*}
with $\chi = \chi(z)$ being a monotone smooth cut-off function: it is equal to one for $z < 1$, equal to zero for $z > 2$. It is worth noting that $\Psi$ is not square summable without the multiplier $\chi_R$. We assume that $R$ is large enough and $\mathop{\rm supp}\nolimits V\subset \{\chi_R=1\}$.

The values for the large constant $R$ and the small parameter $\varepsilon$ will be specified later.
 
We substitute these test functions into the weighted energy integral \eqref{def-Dir-int} and get
\begin{equation*}
{\mathcal E}_s^{\Omega_\beta}(\Psi)  =
\int_0^\infty\int_{\mathbb R^n}t^{1-2s}\left(|\nabla(\chi_R\widetilde\Phi)|^2 + 2\varepsilon \nabla(\chi_R\widetilde\Phi) \cdot \nabla V + \varepsilon^2|\nabla V|^2\right)d\pmb{\bf x} dt.
\end{equation*}

Simple calculations show that 
\begin{align*}
|\nabla(\chi_R\widetilde\Phi)|^2 = 
\chi_R^2| \nabla\Phi\circ\widetilde T_\beta^{-1}|^2
+2\chi_R\partial_t\chi_R(\Phi\circ \widetilde T_\beta^{-1})(\partial_t\Phi\circ \widetilde T_\beta^{-1})
\\ +|\nabla\chi_R|^2\Phi^2\circ\widetilde T_\beta^{-1}
-2\cos\beta\chi_R\partial_z\chi_R(\Phi\circ \widetilde T_\beta^{-1})(\partial_x\Phi\circ \widetilde T_\beta^{-1}).
\end{align*}
and thus after changing the variable $\pmb{\bf x}\to T_\beta(\pmb{\bf x})$ the expression on the left-hand side of \eqref{thm1-main-enq} splits into the sum
\begin{equation}
\label{thm1-qform-as-sum}
C_s{\mathcal E}_s^{\Omega_\beta}(\Psi) - \Lambda_\dagger\|\Psi(\cdot,0); L_2(\mathbb R^n)\|^2 = 2(I_{1} + I_{21} + I_{22} +I_{23} + \varepsilon I_3 + \varepsilon^2 I_4), 
\end{equation}
where
\begin{align*}
    I_{1} &= \frac{C_s}{\sin\beta} \int_0^\infty t^{1-2s}\!\!\int_{\mathbb R^{n-1}\times\mathbb R_+} \chi^2_R(z,t) 
    |\nabla \Phi(x, y, t)|^2 \,d\pmb{\bf x} dt \\ &- 
    \frac{\Lambda_\dagger}{\sin\beta}\!\!\int_{\mathbb R^{n-1}\times\mathbb R_+} \chi^2_R(z,0) 
    |\Phi(x,y, 0)|^2 \,d\pmb{\bf x},\\
    I_{21} &= \frac{C_s}{2\sin\beta}\int_0^\infty\int_0^\infty t^{1-2s} \partial_t \chi_R^2(z, t) \int_{\mathbb R^{n-1}}\partial_t \Phi^2 (x, y, t)\, dxdydtdz,\\
    I_{22} &= \frac{C_s}{\sin\beta}\int_0^\infty\int_0^\infty t^{1-2s} |\nabla\chi_R(z, t)|^2 \int_{\mathbb R^{n-1}}\Phi^2(x, y, t) \,dxdydtdz,\\
    I_{23} &= -\frac{C_s \cot\beta}{2\sin\beta}\int_0^\infty\int_0^\infty t^{1-2s} \partial_z\chi_R^2(z, t) \int_{\mathbb R^{n-1}}\partial_x \Phi^2(x, y, t)\, dxdydtdz,\\
    I_{3} &= {C_s} \int_0^\infty t^{1-2s}\!\!\int_{\mathbb R^{n-1}\times\mathbb R_+} 
    \nabla \widetilde\Phi(\pmb{\bf x}, t) \cdot \nabla V(\pmb{\bf x}, t)\, d\pmb{\bf x} dt,\\
    I_{4} &=  \int_0^\infty \!\!\int_{\mathbb R^{n-1}\times\mathbb R_+} V^2(\pmb{\bf x}, t)\, d\pmb{\bf x} dt.
\end{align*}
Multiplier 2 stands on the right-hand side of \eqref{thm1-qform-as-sum} because $\Psi$ is an even function with respect to $z$, and the waveguide $\Omega_\beta$ is symmetric with respect to the plane $\{z=0\}$, so doubled integration in half-space gives the same result. Let us treat the terms separately. 

\noindent{\bf Estimate of $I_1$.} Since 
\begin{equation}
\label{chi-ineq}
\chi_R(z,t)\leqslant\chi_R(z,0),
\end{equation}
applying the Fubini theorem to the integrals in~$I_1$, we have the estimate
\begin{equation*}
\label{thm1-I_1-term-est}
    I_{1} \leqslant \frac{1}{\sin\beta} \int_{\mathbb R_+} \chi_R^2(z, 0)\left( C_s{\mathcal E}_s^{\omega}(\Phi) - \Lambda_\dagger\|\Phi(\cdot, 0); L_2(\mathbb R^{n-1})\|^2\right)\, dz = 0.
\end{equation*}
Due to \eqref{def-form-via-energy-repr}, and since $\varphi$ is an eigenfunction of ${\mathcal A}_s^\omega$, the expression in the brackets is equal to zero.
It should be mentioned that the inequality \eqref{chi-ineq} becomes an equality for $n>2$.

\noindent{\bf Estimate of $I_{21}+I_{22}$ in case $n-1>2-2s$.} 
In this case the term $I_{21} = 0$ because $\chi_R$ does not depend on $t$. Due to the Lemma \ref{lem-CS-L2-converge}, the remaining term allows the estimate
\begin{equation*}
    I_{22} \leqslant C\int_0^\infty |\partial_z\chi_R(z)|^2\, dz\leqslant CR^{-2}.
\end{equation*}

\noindent{\bf Estimate of $I_{21}+I_{22}$ in case $n-1< 2-2s$.} 
This case is possible only if $n=2$ and $s\in (0,1/2)$ and it is much more tricky. According to~\eqref{def-Caf-Sil} we obtain
$\Phi(\cdot, t) = {\mathcal P}_s(\cdot, t) * \varphi$. The Young convolution inequality provides the estimate
\begin{equation*}
    \|\Phi(\cdot, t); L_2(\mathbb R)\|^2 = \|{\mathcal P}_s(\cdot, t) * \varphi; L_2(\mathbb R)\|^2\leqslant \|{\mathcal P}_s(\cdot, t); L_2(\mathbb R)\|^2  \|\varphi; L_1(\mathbb R)\|^2,
\end{equation*}
and note that $\varphi$ is summable because it is square summable and has a compact support. Introducing a new variable $\widetilde{x}=x/t$ we write 
\begin{equation*}
     \|{\mathcal P}_s(\cdot, t); L_2(\mathbb R)\|^2  = \int_\mathbb R \frac{t^{4s}\,dx}{(x^2 + t^2)^{1+2s}}= \frac{1}{t}\int_\mathbb R \frac{d \widetilde{x}}{(1 +  \widetilde{x}^2)^{1+2s}},
\end{equation*}
and thus 
\begin{equation}
\label{thm1-CS-phi-norm-est}
    \|\Phi(\cdot, t); L_2(\mathbb R)\|^2 \leqslant C t^{-1}.
\end{equation}
Integrating by parts in the term $I_{21}$, we come to the formula
\begin{equation*}
    I_{21} = -\frac{C_s}{2\sin\beta}\int_0^\infty\int_0^\infty \partial_t\left(t^{1-2s} \partial_t\chi_R^2(z, t)\right) \int_{\mathbb R}\Phi^2(x, t)\,dxdtdz.
\end{equation*}
taking into account \eqref{thm1-CS-phi-norm-est}, we obtain an estimate
\begin{equation*}
    I_{21} + I_{22} \leqslant C\int_0^\infty\int_0^\infty \bigl| t^{-1}\partial_t\left(t^{1-2s} \partial_t\chi_R^2(z, t)\right)\bigr| + t^{-2s} |\nabla \chi_R(z, t)|^2\, dtdz.
\end{equation*}
After passing to the polar coordinates $t = r\sin\alpha$, $z = r\cos\alpha$, we get
\begin{equation*}
    I_{21} + I_{22} \leqslant C\int_R^{2R} R^{-1}r^{-2s} + R^{-2}r^{1-2s}\,dr \int_0^{\pi/2} (\sin\alpha)^{-2s} + (\sin\alpha)^{2-2s} d\alpha,
\end{equation*}
and since $s < 1/2$ the last integral converges. Hence,
\begin{equation*}
    I_{21} + I_{22} \leqslant CR^{-2s}.
\end{equation*}

\noindent{\bf Estimate of $I_{21}+I_{22}$ in case $n-1=2-2s$.}
In this case we have $n=2$ and $s=1/2$. Integrating by parts in the term $I_{21}$, we get
\begin{multline*}
    I_{21} = -\frac{C_s}{2\sin\beta}\int_0^\infty\int_0^\infty \partial_t^2\chi_R^2(z, t) \int_{\mathbb R}\Phi^2(x, t)\,dxdtdz\\
    - \frac{C_s}{2\sin\beta}\int_0^\infty \partial_t \chi_R^2(z, 0) \int_{\mathbb R^{n-1}}\Phi^2 (x, 0)\, dxdz.
\end{multline*}
Using \eqref{def-Caf-Sil-prolem}, we rearrange the last term, and thus,
\begin{multline}
\label{eq:I_12+I_22}
    I_{21} + I_{22}= \frac{C_s}{\sin\beta}\int_0^\infty\int_0^\infty |\partial_z\chi_R(z,t)|^2-\chi_R(z, t)\partial_t^2\chi_R(z, t) \int_{\mathbb R}\Phi^2(x, t)\,dxdtdz\\
    - \frac{C_s}{2\sin\beta}\int_0^\infty \partial_t \chi_R^2(z, 0) dz.
\end{multline}
Since $\Phi(x, t) = {\mathcal P}(\cdot, t) * \varphi(x)$ and the Fourier transform preserves $L_2$-norm, the following identity is fulfilled
\begin{equation*}
\|\Phi; L_2(\mathbb R\times(0, 2R))\|^2 = 
\int_0^{2R}\int_{-\infty}^\infty |{\mathcal F}_1{\mathcal P}_{1/2}(\cdot, t)(\xi)|^2|{\mathcal F}_1\varphi(\xi)|^2\,d\xi\,dt.
\end{equation*}
Direct computations show that
\begin{equation*}
{\mathcal F}_1{\mathcal P}_{1/2}(\cdot, t)(\xi) = \sqrt{\frac{\pi}2}e^{-t|\xi|},
\end{equation*}
thus, we have
\begin{equation*}
\|\Phi; L_2(\mathbb R\times(0, 2R))\|^2 =  \sqrt{\frac{\pi}2}
\int_0^{2R}\int_{-\infty}^\infty e^{-2t|\xi|}|{\mathcal F}_1\varphi(\xi)|^2\,d\xi\,dt.
\end{equation*}
Applying the Fubini theorem, we get
\begin{equation*}
\|\Phi; L_2(\mathbb R\times(0, 2R))\|^2 = \sqrt{\frac{\pi}2}
\int_{-\infty}^\infty \frac{1 - e^{-4R|\xi|}}{2|\xi|}|{\mathcal F}_1\varphi(\xi)|^2\,d\xi\,dt.
\end{equation*}
We decompose the last integral into a sum
\begin{equation*}
J_1 + J_2 + J_3 :=
\left(\int_{|\xi|\leqslant R^{-1}} + \int_{ R^{-1} \leqslant |\xi| \leqslant 1} + \int_{|\xi| \geqslant 1}\right) \frac{1 - e^{-4R|\xi|}}{2|\xi|}|{\mathcal F}_1\varphi(\xi)|^2\,d\xi\,dt.
\end{equation*}
The function $\varphi$ has a compact support, so its Fourier transform is a smooth function. Introducing a new variable $\widetilde\xi = R\xi$, we produce an estimate for $J_1$ 
\begin{equation*}
J_1 \leqslant C\int_{0}^1\frac{1-e^{-4\widetilde\xi}}{\widetilde\xi}d\widetilde\xi,    
\end{equation*}
with a convergent integral on the right-hand side. The other summands fulfilled the inequalities
\begin{equation*}
\begin{aligned}
J_2&\leqslant C\int_{R^{-1}}^1\frac{d\xi}{\xi}\leqslant C\log(R),\\
J_3&\leqslant C\int_1^\infty |{\mathcal F}_1\varphi(\xi)|^2\,d\xi\leqslant C.
\end{aligned}
\end{equation*}
Adding up the estimates for $J_1$, $J_2$, and $J_3$, we obtain
\begin{equation*}
\|\Phi; L_2(\mathbb R\times(0, 2R))\|^2 \leqslant C\log(R).
\end{equation*}
Noticing that the area $\{(z, t)\colon |z| < 2R,\ |t|<2R\}$ contains the support of $|\partial_z\chi_R|^2-\chi_R\partial_t^2\chi_R$, we apply the last estimate to \eqref{eq:I_12+I_22} and get
\begin{equation*}
    I_{21} + I_{22} \leqslant CR^{-1}\log(R).
\end{equation*}

\noindent{\bf Estimate of $I_{23}$.} 
The Cauchy -- Schwartz inequality guaranties that $I_{23}$ is finite since
\begin{multline*}
\frac{C_s^2}{\sin^2\beta}\left(\int_0^\infty\int_0^\infty t^{1-2s} |\partial_z\chi_R^2(z, t)| \int_{\mathbb R^{n-1}}|\partial_x \Phi^2(x, y, t)|\, dxdydtdz\right)^2 \\ \leqslant  
I_{22}\frac{C_s}{\sin\beta} \int_{\mathbb R_+} \chi_R^2(z, t){\mathcal E}_s^{\omega}(\Phi)\, dz.
\end{multline*}
Hence, due to the Fubini theorem the integrand in  $I_{23}$ is finite for almost all $z, t\in\mathbb R_+$, $y\in {\mathbb R}^{n-2}$ and
we have $I_{23} = 0$, since $\Phi$ is continuous when $t>0$ and 
the fundamental theorem of calculus results in 
\begin{equation*}
\int_{-\infty}^{\infty}\partial_x \Phi^2 (x, y, t)\, dx = 0.
\end{equation*}

\noindent{\bf Estimate of $I_3$.} 
Using \eqref{thm1-div-Phi-eq}, we integrate by parts the term $I_3$ and get
\begin{equation*}
    I_3 = -C_s\int_0^{\infty} \int_{\mathbb R^{n-1}} t^{1 - 2s} \partial_z \widetilde\Phi(x, y, 0, t) V(x, y, 0, t) \,dxdydt.
\end{equation*}
We point out that 
$\partial_z \widetilde\Phi(x, y, 0, t) \not\equiv 0$. Assuming the converse, we have
$$\partial_z \widetilde\Phi(x, y, 0, t) = -\cos\beta\,\partial_x\Phi(x \sin\beta, y, t)\equiv 0,$$ and therefore $\Phi(x,y,t)$ does not depend on $x$. But it leads to a contradiction because due to \eqref{def-Caf-Sil} and \eqref{def-Poisson-kernel} we have $\Phi(x,y,t)\to 0$ as $|x|\to\infty$. Thus, there exists a function $V$ such that $I_3<0$.

\noindent{\bf Estimate of $I_4$.}  
The last term $I_4$ in \eqref{thm1-qform-as-sum} is bounded.

\medskip

Finally, we combine all the estimates and obtain that
\begin{equation*}
C_s{\mathcal E}_s^{\Omega_\beta}(\Psi) - \Lambda_\dagger\|\Psi(\cdot,0); L_2(\mathbb R^n)\|^2 \leqslant 2\varepsilon I_3 + C(R^{-2\gamma}\log(R) + \varepsilon^2),
\end{equation*}
where $\gamma=1$ for $n-1\leqslant 2-s$ and $\gamma=s$ otherwise.
To end the proof, it remains to choose $\varepsilon$ to be equal $R^{-\gamma}$, then
the required inequality \eqref{thm1-main-enq} is satisfied for sufficiently large $R$.
\end{proof}

\section{Monotonicity of the discrete spectrum}
\label{monotonicity}

Let $0<\alpha<\beta<\pi/2$. We define a continuous piecewise-linear bijective map $T_{\alpha,\beta} = T_\beta\circ T_\alpha^{-1}:\Omega_\alpha\to \Omega_\beta$ (see fig. \ref{fig-01}). As above we define its extension $\widetilde T_{\alpha,\beta}: \Omega_\alpha\times\mathbb R_+\to \Omega_\beta\times\mathbb R_+$  by the formula $\widetilde T_{\alpha,\beta}(\pmb{\bf x}, t) = (T_{\alpha,\beta}(\pmb{\bf x}),t)$. 

\begin{lemma} \label{lem-q-form-monotony}
    Suppose there exist $\beta\in(0,\pi/2)$ and $u\in\widetilde{H}^s(\Omega_\beta)$ normalized in $L_2(\Omega_\beta)$  such that $a_s^{\Omega_\beta}[u]<\Lambda_\dagger$; then for all $\alpha\in(0,\beta)$
    functions $v_\alpha \in\widetilde{H}^s(\Omega_\alpha)$ defined as
    \begin{equation*}
v_\alpha = \left(\frac{\sin\alpha}{\sin\beta}\right)^{1/2}u \circ T_{\alpha,\beta}
\end{equation*}
are also normalized in  $L_2(\Omega_\alpha)$ and the map $\alpha \mapsto a_s^{\Omega_\alpha}[v_\alpha]$ increases monotonically, implying that $a_s^{\Omega_\alpha}[v_\alpha] < \Lambda_\dagger$ for all $\alpha \in (0, \beta)$.
\end{lemma}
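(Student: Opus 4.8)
The plan is to exploit the Caffarelli--Silvestre extension to reduce the claim about the nonlocal form $a_s^{\Omega_\alpha}$ to a computation with the weighted Dirichlet integral $\mathcal{E}_s^{\Omega_\alpha}$, exactly as in the proof of the existence theorem. First I would verify the normalization: since $T_{\alpha,\beta}$ acts linearly in the $(x,z)$-plane with Jacobian $\sin\beta/\sin\alpha$ (the factor coming from the $x$-direction stretching $x \mapsto x\csc\beta\sin\alpha + \dots$, while $y$ and $z$ are untouched), the change of variables $\pmb{\bf x}\mapsto T_{\alpha,\beta}(\pmb{\bf x})$ in $\|v_\alpha; L_2(\Omega_\alpha)\|^2$ immediately gives $\|v_\alpha;L_2(\Omega_\alpha)\|^2 = \|u;L_2(\Omega_\beta)\|^2 = 1$. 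By Lemma~\ref{lem-Hs-iso} (applied twice, via $\Omega_{\pi/2}$) we also get $v_\alpha\in\widetilde{H}^s(\Omega_\alpha)$.

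Next, using \eqref{def-form-via-energy-repr}, write $a_s^{\Omega_\alpha}[v_\alpha] = C_s\mathcal{E}_s^{\Omega_\alpha}(V_\alpha)$ where $V_\alpha$ is the Caffarelli--Silvestre extension of $v_\alpha$. Since $V_\alpha$ is the \emph{minimizer} of $\mathcal{E}_s^{\Omega_\alpha}$ over $\mathcal{W}_s^{\Omega_\alpha}(v_\alpha)$, it suffices to bound $\mathcal{E}_s^{\Omega_\alpha}$ on a convenient competitor. The natural choice, mirroring the existence proof, is $W_\alpha = \left(\tfrac{\sin\alpha}{\sin\beta}\right)^{1/2} U\circ\widetilde T_{\alpha,\beta}$, where $U$ is the Caffarelli--Silvestre extension of $u$ (so $W_\alpha|_{t=0} = v_\alpha$, hence $W_\alpha\in\mathcal{W}_s^{\Omega_\alpha}(v_\alpha)$ once one checks finiteness of its energy, which follows from the Lipschitz change of variables). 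Then I would change variables $\pmb{\bf x}\mapsto T_{\alpha,\beta}(\pmb{\bf x})$ in $\mathcal{E}_s^{\Omega_\alpha}(W_\alpha)$ and track how the gradient transforms under $\widetilde T_{\alpha,\beta}$. Because $T_{\alpha,\beta}$ sends $(x,z)$ to roughly $(x\,\sin\alpha/\sin\beta + |z|(\cot\alpha-\cot\beta)\cdot(\text{sign factors}),\, y,\, z)$ and leaves $t$ fixed, the chain rule produces $\partial_x\to(\sin\alpha/\sin\beta)\partial_x$, $\partial_z\to \partial_z + (\text{const})\,\partial_x$, $\partial_y\to\partial_y$, $\partial_t\to\partial_t$, together with the Jacobian factor $\sin\beta/\sin\alpha$; the prefactor $\sin\alpha/\sin\beta$ in $W_\alpha$ exactly cancels that Jacobian. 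The upshot is an explicit formula for $a_s^{\Omega_\alpha}[v_\alpha]$ as $C_s$ times an integral of $t^{1-2s}$ against a quadratic form in $(\partial_x U,\partial_y U,\partial_z U,\partial_t U)$ whose coefficients depend on $\alpha$ only through $\sin\alpha/\sin\beta$ and $\cot\alpha-\cot\beta$.

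I would then differentiate this expression in $\alpha$ (or, more cleanly, repeat the whole reduction with $\beta$ replaced by an intermediate angle $\beta'\in(\alpha,\beta)$, obtaining $a_s^{\Omega_\alpha}[v_\alpha] \le a_s^{\Omega_{\beta'}}[v_{\beta'}]$ for all $\alpha<\beta'$, which is exactly monotonicity without any differentiation). Concretely, comparing $v_\alpha$ with $v_{\beta'}$ amounts to applying the same argument with the data $(u,\Omega_\beta)$ replaced by $(v_{\beta'},\Omega_{\beta'})$ and noting the group property $T_{\alpha,\beta'}\circ T_{\beta',\beta} = T_{\alpha,\beta}$, so the family $\{v_\alpha\}$ is consistent. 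The monotonicity itself then reduces to the assertion that, for the fixed function $U$, the map $\alpha\mapsto C_s\mathcal{E}_s^{\Omega_\alpha}\big((\sin\alpha/\sin\beta)^{1/2}U\circ\widetilde T_{\alpha,\beta}\big)$ is nondecreasing; after the change of variables this is a statement about an explicit $\alpha$-dependent quadratic form, and I expect the decisive cross term — the one multiplying $\partial_x U\,\partial_z U$ with a coefficient like $(\cot\beta-\cot\alpha)$ — to vanish upon integration in $x$ by the fundamental-theorem-of-calculus argument used for $I_{23}$ in the existence proof (here $\int_{\mathbb R}\partial_x(\Phi\text{-like})^2\,dx = 0$ since $U$ decays in $x$).

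The main obstacle, as I see it, is twofold: (i) justifying that $W_\alpha$ actually lies in $\mathcal{W}_s^{\Omega_\alpha}(v_\alpha)$, i.e.\ that its weighted Dirichlet energy is finite and its trace is genuinely $v_\alpha$ — this is where the Lipschitz-invariance of the relevant spaces and Lemma~\ref{lem-CS-L2-converge}-type bounds are needed, and one must be slightly careful near $z=0$ where $T_{\alpha,\beta}$ has a corner; and (ii) isolating the sign of the $\alpha$-derivative of the transformed quadratic form. For (ii) the key identity is that the only $\alpha$-dependent pieces are a factor $(\sin\alpha/\sin\beta)$ on $|\partial_x U|^2$ replaced against the unchanged $|\partial_y U|^2 + |\partial_z U|^2 + |\partial_t U|^2$ contributions, plus the cross term; once the cross term integrates to zero, monotonicity follows because $\sin\alpha/\sin\beta$ is increasing in $\alpha$ on $(0,\beta)$ and multiplies the manifestly nonnegative quantity $\int t^{1-2s}|\partial_x U|^2$, while the complementary piece carries the reciprocal-type weight; the bookkeeping of which term gets $\sin\alpha/\sin\beta$ versus $\sin\beta/\sin\alpha$ is exactly the delicate point to get right, and I would double-check it against the degenerate endpoint $\alpha\to\beta$ (where $v_\alpha\to u$ and the form returns $a_s^{\Omega_\beta}[u]$) and against the known behavior as $\alpha\to 0$.
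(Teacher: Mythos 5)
Your overall framework (push $U$ forward as a competitor for the energy of $v_\alpha$, then minimize) matches the paper, but the decisive step is wrong. After you change variables, \emph{all} of the diagonal terms lose their $\alpha$-dependence: the factor $\sin\alpha/\sin\beta$ that you expect to survive on $\int t^{1-2s}|\partial_x U|^2$ cancels exactly against the Jacobian and the normalizing prefactor in $V_\alpha$, so (writing everything on the straight tube) one gets
\begin{equation*}
C_s\,{\mathcal E}_s^{\Omega_\alpha}(V_\alpha)\;=\;C_s\,{\mathcal E}_s^{\Omega_{\pi/2}}(V_{\pi/2})\;+\;\cos\alpha\cdot{\mathfrak r}(V_{\pi/2}),
\qquad
{\mathfrak r}(V)=-2C_s\!\int_0^\infty\!\!\int_{\mathbb R^n}t^{1-2s}\mathop{\rm sgn}\nolimits(z)\,\partial_xV\,\partial_zV\,d\pmb{\bf x}\,dt,
\end{equation*}
i.e.\ the \emph{only} $\alpha$-dependence sits in front of the cross term. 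That cross term does not vanish: the $I_{23}$ argument you invoke worked because there $\Phi$ was the extension of the cross-sectional eigenfunction and the integrand was the exact derivative $\partial_x(\Phi^2)$; here the integrand is $\partial_xV\,\partial_zV$ for a genuinely $(x,z)$-dependent $U$, which is not an exact $x$-derivative and has no reason to integrate to zero. If it did vanish, the transformed form would be constant in $\alpha$ and there would be no monotonicity to prove.

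The missing idea — and the reason the hypothesis $a_s^{\Omega_\beta}[u]<\Lambda_\dagger$ appears in the statement at all, a hypothesis your argument never uses — is a sign determination for ${\mathfrak r}$: by Theorem~\ref{thm-sp-in-tube} and the max-min principle, $C_s{\mathcal E}_s^{\Omega_{\pi/2}}(V_{\pi/2})\geqslant\Lambda_\dagger$, so $a_s^{\Omega_\beta}[u]=C_s{\mathcal E}_s^{\Omega_{\pi/2}}(V_{\pi/2})+\cos\beta\,{\mathfrak r}(V_{\pi/2})<\Lambda_\dagger$ forces ${\mathfrak r}(V_{\pi/2})<0$. Since $\cos\alpha>\cos\beta$ for $\alpha<\beta$ and ${\mathfrak r}(V_{\pi/2})<0$, the map $\alpha\mapsto\cos\alpha\cdot{\mathfrak r}(V_{\pi/2})$ is increasing, which gives $C_s{\mathcal E}_s^{\Omega_\alpha}(V_\alpha)<a_s^{\Omega_\beta}[u]$; the conclusion then follows from \eqref{def-form-via-energy-repr} by taking the infimum over ${\mathcal W}_s^{\Omega_\alpha}(v_\alpha)$, as you intended. (Your normalization and $\widetilde H^s$-membership checks are fine up to an inverted Jacobian, and the intermediate-angle/group-property reduction is harmless but does not substitute for the sign argument.)
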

\begin{proof} 
The equality $\|v_\alpha, L_2(\Omega_\alpha)\| = 1$ follows immediately from the fact that the Jacobian of $T_{\alpha,\beta}$ is constant and equals to $\frac{\sin\alpha}{\sin\beta}$.
Due to Lemma \ref{lem-Hs-iso}, the functions $v_{\alpha}$ belong to the space $\widetilde{H}^s(\Omega_\alpha)$. 

Now, let us denote by $U$ the Caffarelli -- Silvestre extension of $u$. 
Moreover, we define the family of functions
\begin{equation*}
    V_\alpha =  \left(\frac{\sin\alpha}{\sin\beta}\right)^{1/2} U \circ \widetilde T_{\alpha,\beta}
\end{equation*}
with traces $V_\alpha(\cdot, 0) = v_\alpha$.

Due to \eqref{def-form-via-energy-repr}
\begin{equation*}
    a_s^{\Omega_\beta}[u] = C_s{\mathcal E}_s^{\Omega_\beta}(U).
\end{equation*}
 Using the change of variables $\pmb{\bf x}\to T_\beta(\pmb{\bf x})$ we get
 \begin{equation*}
    a_s^{\Omega_\beta}[u] = C_s{\mathcal E}_s^{\Omega_{\pi/2}}(V_{\pi/2}) + \cos\beta\cdot {\mathfrak r}(V_{\pi/2})
\end{equation*}
where
\begin{equation*}
 {\mathfrak r} (V) = 
- 2 C_s\int_0^\infty\int_{\mathbb R^n}t^{1-2s} \mathop{\rm sgn}\nolimits(z) \partial_x V(\pmb{\bf x},t)\partial_z V(\pmb{\bf x},t)d\pmb{\bf x} dt.
\end{equation*}
According to Theorem~\ref{thm-sp-in-tube}, the max-min principle implies that 
$
C_s{\mathcal E}_s^{\Omega_{\pi/2}}(V_{\pi/2}) \geqslant \Lambda_\dagger.
$
Therefore, in case $a_s^{\Omega_\beta}[u]<\Lambda_\dagger$ it turns out that the term ${\mathfrak r}(V_{\pi/2})$ must be negative.
Hence, the following inequality holds true
 \begin{equation*}
    a_s^{\Omega_\beta}[u] > C_s{\mathcal E}_s^{\Omega_{\pi/2}}(V_{\pi/2}) + \cos\alpha\cdot {\mathfrak r}(V_{\pi/2}).
\end{equation*}
for all $\alpha\in(0, \beta)$. Changing  the variables $\pmb{\bf x}\to T_\alpha^{-1}(\pmb{\bf x})$ in the right hand side we rearrange the inequality to get
\begin{equation*}
    a_s^{\Omega_\beta}[u] > C_s{\mathcal E}_s^{\Omega_\alpha}(V_{\alpha}).
\end{equation*}
Note that, 
the  function $V_{\alpha}(\cdot, 0)$ 
coincides width $v_\alpha$ and hence belong to the space
${\mathcal W}_s^{\Omega_\alpha}(v_\alpha)$.
After obtaining the infimum  of the right hand side of the last inequality over the set ${\mathcal W}_s^{\Omega_\alpha}(v_\alpha)$, due to \eqref{def-form-via-energy-repr} we have $a_s^{\Omega_\beta}[u] > a_s^{\Omega_\alpha}[v_\alpha]$ which ends the proof.
\end{proof}

\begin{theorem} \label{thm-monotonicity}
    Given $s\in(0, 1)$, if ${\mathcal A}_s^{\Omega_\beta}$ has $k$ eigenvalues below the threshold for some $\beta\in(0,\pi/2)$, then the operator ${\mathcal A}_s^{\Omega_\alpha}$ has at least $k$ eigenvalues below the threshold for all $\alpha\in (0, \beta)$. Moreover, the $j$-th eigenvalue $\lambda_j({\mathcal A}_s^{\Omega_\alpha})$ is a monotonically increasing function with respect to $\alpha$ for all $1\leqslant j\leqslant k$. 
\end{theorem}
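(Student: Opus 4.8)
The plan is to derive the theorem from Lemma~\ref{lem-q-form-monotony} by means of the variational (Courant--Fischer) characterization of the eigenvalues below the essential spectrum. Recall that $\inf\sigma_{ess}(\mathcal{A}_s^{\Omega_\alpha})=\Lambda_\dagger$ for every $\alpha\in(0,\pi/2]$ (for $\alpha=\pi/2$ this is Theorem~\ref{thm-sp-in-tube}), so whenever $\lambda_j(\mathcal{A}_s^{\Omega_\alpha})<\Lambda_\dagger$ one has
$$
\lambda_j(\mathcal{A}_s^{\Omega_\alpha})=\min_{\substack{L\subset\widetilde{H}^s(\Omega_\alpha)\\ \dim L=j}}\ \max_{u\in L\setminus\{0\}}\ \frac{a_s^{\Omega_\alpha}[u]}{\|u; L_2(\mathbb R^n)\|^2},
$$
and, conversely, if for some $j$-dimensional subspace the inner maximum is $<\Lambda_\dagger$, then $\mathcal{A}_s^{\Omega_\alpha}$ has at least $j$ eigenvalues below the threshold and the above minimum (hence $\lambda_j(\mathcal{A}_s^{\Omega_\alpha})$) is bounded by that maximum.

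The decisive observation is that the change of variables used in Lemma~\ref{lem-q-form-monotony} is linear in $u$: since $T_{\alpha,\beta}$ is a fixed bijection with constant Jacobian $\sin\alpha/\sin\beta$, the map
$$
\mathcal{T}_\alpha\colon\ u\ \longmapsto\ \Bigl(\tfrac{\sin\alpha}{\sin\beta}\Bigr)^{1/2}u\circ T_{\alpha,\beta}
$$
is a linear bijection of $\widetilde{H}^s(\Omega_\beta)$ onto $\widetilde{H}^s(\Omega_\alpha)$ (Lemma~\ref{lem-Hs-iso}) that preserves the $L_2$-norm. Fix $\beta$ as in the statement and let $\psi_1,\dots,\psi_k$ be $L_2$-orthonormal eigenfunctions of $\mathcal{A}_s^{\Omega_\beta}$ with eigenvalues $\lambda_1(\mathcal{A}_s^{\Omega_\beta})\leqslant\dots\leqslant\lambda_k(\mathcal{A}_s^{\Omega_\beta})<\Lambda_\dagger$. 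For $j\leqslant k$ set $L_\beta^{(j)}=\mathrm{span}\{\psi_1,\dots,\psi_j\}$; every normalized $u\in L_\beta^{(j)}$ obeys $a_s^{\Omega_\beta}[u]\leqslant\lambda_j(\mathcal{A}_s^{\Omega_\beta})<\Lambda_\dagger$, so Lemma~\ref{lem-q-form-monotony} applies to it and yields, for each $\alpha\in(0,\beta)$,
$$
a_s^{\Omega_\alpha}[\mathcal{T}_\alpha u] < a_s^{\Omega_\beta}[u] \leqslant \lambda_j(\mathcal{A}_s^{\Omega_\beta}).
$$
Since $\mathcal{T}_\alpha$ is injective, $L_\alpha^{(j)}:=\mathcal{T}_\alpha L_\beta^{(j)}$ is a $j$-dimensional subspace of $\widetilde{H}^s(\Omega_\alpha)$, and the last inequality holds for every normalized element of it; as $L_\alpha^{(j)}$ is finite-dimensional, its unit sphere is compact and $a_s^{\Omega_\alpha}$ is continuous there, so the maximum of the Rayleigh quotient over $L_\alpha^{(j)}\setminus\{0\}$ is strictly below $\lambda_j(\mathcal{A}_s^{\Omega_\beta})$.

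Using $L_\alpha^{(j)}$ as a trial subspace in the max--min formula gives $\lambda_j(\mathcal{A}_s^{\Omega_\alpha})<\lambda_j(\mathcal{A}_s^{\Omega_\beta})<\Lambda_\dagger$, so in particular this $j$-th max--min value is a genuine eigenvalue; ranging over $j=1,\dots,k$ shows that $\mathcal{A}_s^{\Omega_\alpha}$ has at least $k$ eigenvalues below the threshold and that $\lambda_j(\mathcal{A}_s^{\Omega_\alpha})\leqslant\lambda_j(\mathcal{A}_s^{\Omega_\beta})$ for $1\leqslant j\leqslant k$. Monotonicity on the whole interval follows by iteration: for $0<\alpha_1<\alpha_2\leqslant\beta$ the operator $\mathcal{A}_s^{\Omega_{\alpha_2}}$ already has at least $k$ eigenvalues below $\Lambda_\dagger$ by the previous step, so the same argument with $\beta$ replaced by $\alpha_2$ produces $\lambda_j(\mathcal{A}_s^{\Omega_{\alpha_1}})\leqslant\lambda_j(\mathcal{A}_s^{\Omega_{\alpha_2}})$ for all $1\leqslant j\leqslant k$. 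Since Lemma~\ref{lem-q-form-monotony} does the analytic work, no serious obstacle remains; the only items demanding attention are the linearity and injectivity of $\mathcal{T}_\alpha$ (so the trial space keeps dimension $j$), the upgrade from the pointwise inequality of the lemma to a uniform bound on the Rayleigh quotient over the whole $j$-dimensional subspace, and the bookkeeping that the $j$-th max--min value is an eigenvalue precisely while it stays below $\Lambda_\dagger=\inf\sigma_{ess}(\mathcal{A}_s^{\Omega_\alpha})$.
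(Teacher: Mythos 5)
Your proposal is correct and follows essentially the same route as the paper: transport the span of the first $j$ eigenfunctions to $\Omega_\alpha$ via the norm-preserving map from Lemma~\ref{lem-q-form-monotony} and feed it into the Courant--Fischer principle. The only cosmetic difference is that you phrase the variational step as a min--max over the $j$-dimensional trial subspace, whereas the paper argues via the max--min over codimension-$(j-1)$ subspaces intersecting the (unit sphere of the) span of the transported eigenfunctions --- your formulation is, if anything, slightly cleaner.
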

\begin{proof} 

Let us assume that the functions $u_{1}, u_{2}, \ldots, u_{k}$ form an orthonormal set in the space $L_2(\Omega_\beta)$, and these functions are the eigenfunctions of ${\mathcal A}_s^{\Omega_\beta}$ corresponding to the eigenvalues $\lambda_{1}({\mathcal A}_s^{\Omega_\beta}), \lambda_{2}({\mathcal A}_s^{\Omega_\beta}), \ldots, \lambda_{k}({\mathcal A}_s^{\Omega_\beta})$. Due to Lemma \ref{lem-q-form-monotony}, the functions $v_{j,\alpha} = \left(\frac{\sin\alpha}{\sin\beta}\right)^{1/2}u_{j}\circ T_{\alpha,\beta}$, $1\leqslant j\leqslant k$, belong to the space $\widetilde{H}^s(\Omega_\alpha)$, maintain their orthonormality in $L_2(\Omega_\alpha)$, and the following inequality is fulfilled
\begin{equation}
\label{thm2-form-ineq}
a_s^{\Omega_\alpha}[v_{j,\alpha}] < a_s^{\Omega_\beta}[u_{j}]. 
\end{equation}
Now, the existence of $k$ eigenvalues below the threshold and their monotonicity follows from \cite[Theorem 10.2.3]{BiSo}. Here we present a more straightforward way to derive this properties. We assign the orthonormal set of functions $E_j = \{v_{1,\alpha}, v_{2,\alpha},\ldots, v_{j,\alpha}\}$ in $L_2(\mathbb R^n)$ to each $j$ not greater than $k$. The $L_2(\mathbb R^n)$-norm of any convex combination of functions from $E_j$ is equal to one. Since the quadratic form $a_s$ is convex, the inequality \eqref{thm2-form-ineq} guaranties that the Rayleigh ratio of any function from the convex hull of $E_j $ is smaller than $\lambda_j({\mathcal A}_s^{\Omega_\beta})$. Hence, any subspace in $L_2(\mathbb R^n)$ with codimension $j-1$ has a nontrivial intersection with  the convex hull of $E_j$. Due to the max-min principle it follows that $\lambda_j({\mathcal A}_s^{\Omega_\alpha}) < \lambda_j({\mathcal A}_s^{\Omega_\beta})$ .
\end{proof}

\begin{remark}
    The first eigenvalue $\lambda_1({\mathcal A}_s^{\Omega_\alpha})$ tends to $\Lambda_\dagger$ as $\alpha\to\pi/2$. 
\end{remark}
\begin{proof}
Let $u$ be an eigenfunction normalized in $L_2(\Omega_\alpha)$ corresponding to $\lambda_1({\mathcal A}_s^{\Omega_\alpha})$. Using the notation provided in the proof of Lemma \ref{lem-q-form-monotony} we can write
 \begin{equation*}
    a_s^{\Omega_\alpha}[u] = C_s{\mathcal E}_s^{\Omega_{\pi/2}}(V_{\pi/2}) + \cos\alpha\cdot {\mathfrak r}(V_{\pi/2}).
\end{equation*}
Applying the following arithmetic–geometric mean inequality
\begin{equation*}
    2|\partial_x V_{\pi/2}(\pmb{\bf x},t)\partial_z V_{\pi/2}(\pmb{\bf x},t)| \leqslant
    |\nabla V_{\pi/2}(\pmb{\bf x},t)|^2,
\end{equation*}
since ${\mathfrak r}(V_{\pi/2}) < 0 $ we get an estimate
\begin{equation*}
     a_s^{\Omega_\alpha}[u] \geqslant C_s(1 - \cos\alpha){\mathcal E}_s^{\Omega_{\pi/2}}(V_{\pi/2}).
\end{equation*}
Since $\lambda_1({\mathcal A}_s^{\Omega_\alpha}) = a_s^{\Omega_\alpha}[u]$, Theorem \ref{thm-sp-in-tube} yields the following chain of inequalities
\begin{equation*}
    \Lambda_\dagger>\lambda_1({\mathcal A}_s^{\Omega_\alpha})\geqslant C_s(1 - \cos\alpha){\mathcal E}_s^{\Omega_{\pi/2}}(V_{\pi/2})\geqslant(1-\cos\alpha)\Lambda_\dagger.
\end{equation*}
Since $\cos\alpha\to0$ as $\alpha\to\pi/2$, the squeeze theorem yields the convergence of the first eigenvalue $\lambda_1({\mathcal A}_s^{\Omega_\alpha})$ to $\Lambda_\dagger$.
\end{proof}

\paragraph*{Acknowledgments.} 
The results were obtained under support of the Russian Science Foundation (RSF) grant 19-71-30002.

\printbibliography
\end{document}